%
%
%

\documentclass[graybox]{svmult}


\usepackage{mathptmx}       
\usepackage{helvet}         
\usepackage{courier}        
\usepackage{type1cm}        
%
\usepackage{makeidx}         
\usepackage{graphicx}        
\graphicspath{{Figures/}}
\usepackage{multicol}        
\usepackage[bottom]{footmisc}


\makeindex             


\usepackage{amsmath,amssymb}
\usepackage{algorithm}
\usepackage{algorithmic}
\usepackage{caption}
\usepackage{subcaption}
\newcommand\restr[2]{\ensuremath{\left.#1\right|_{#2}}}
\newcommand{\Tau}{\mathcal{T}}
\newcommand{\parset}{\mathcal{D}}
\newtheorem{prop}{Proposition}[section]


\begin{document}

\title*{Reduced basis methods for quasilinear elliptic PDEs with applications to permanent magnet synchronous motors}
\author{Michael Hinze, Denis Korolev}
\institute{Michael Hinze \at University of Koblenz-Landau, Mathematical Institute, \email{hinze@uni-koblenz.de}
\and Denis Korolev \at University of Koblenz-Landau, Mathematical Institute, \email{korolev@uni-koblenz.de}}
%
%
\maketitle

\abstract*{ In this paper, we propose a certified reduced basis (RB) method for \textit{quasilinear} elliptic problems together with  its application to \textit{nonlinear magnetostatics} equations, where the later model  permanent magnet synchronous motors (PMSM). The parametrization enters through the geometry of the domain and thus, combined with the nonlinearity, drives our reduction problem.  We provide a residual-based \textit{a-posteriori} error bound which, together with the Greedy approach, allows to construct  reduced-basis spaces of small dimensions. We use the empirical interpolation method (EIM) to guarantee the efficient \textit{offline-online} computational procedure. The reduced-basis solution is then obtained with the surrogate of the Newton's method. The numerical results indicate that the proposed reduced-basis method provides a significant computational gain, compared to a finite element method.}

\abstract{ In this paper, we propose a certified reduced basis (RB) method for \textit{quasilinear} elliptic problems together with  its application to \textit{nonlinear magnetostatics} equations, where the later model  permanent magnet synchronous motors (PMSM). The parametrization enters through the geometry of the domain and thus, combined with the nonlinearity, drives our reduction problem.  We provide a residual-based \textit{a-posteriori} error bound which, together with the Greedy approach, allows to construct  reduced-basis spaces of small dimensions. We use the empirical interpolation method (EIM) to guarantee the efficient \textit{offline-online} computational procedure. The reduced-basis solution is then obtained with the surrogate of the Newton's method. The numerical results indicate that the proposed reduced-basis method provides a significant computational gain, compared to a finite element method.}

\section{Introduction}
\label{sec:1}
A crucial task in the design of electric motors is the creation of proper magnetic circuits. In permanent magnet electric motors, the latter is created by electromagnets and permanent magnets. The corresponding mathematical model is governed by a quasilinear elliptic PDE (magnetostatic approximation of Maxwell equations) which describes the magnetic field generated by the sources. One of the engineering design goals consists in improving the performance of the motor through modifying the size and/or location of the permanent magnets. This problem can be viewed as a parameter optimization problem \cite{alla2019certified,bontinck2018robust,ion2018robust,lass2017model}, where the parameters determine the geometry of the computational domain. The underlying optimization problem then requires repeated solutions of the nonlinear (in general) elliptic problem on the parametrized domain.  Therefore, there is an increasing demand for the fast and reliable reduced models as surrogates in the optimization problem. To achieve this goal we use the reduced-basis method \cite{haasdonk2017reduced,quarteroni2015reduced}. The extension of reduced-basis techniques to nonlinear problems is a non-trivial task and the crucial ingredients of the method then highly depend on the underlying problem. Efficient implementation of the greedy procedure requires  a-posteriori error bounds, which, to the best of our knowledge, are not yet available for the problem we consider. In \cite{AbdHom2015} the reduced-basis method is applied to approximate the micro-problems in a homogenization procedure for quasilinear elliptic PDEs with non-monotone nonlinearity. However, we note that this different from our approach, where we use the reduced-basis method for the approximation of the solution of a quasilinear PDE. In our case, the monotonicity of the problem allows the a-posteriori control of the global reduced-basis approximation error.  We provide the corresponding error bound for quasilinear elliptic equations, which is based on a monotonicity argument and can be viewed as a generalisation of the classical error bound for linear elliptic problems \cite{rozza2007reduced}, where the coercivity constant is now substituted by the monotonicity constant of the spatial differential operator. The computational efficiency of the reduced-basis method is based on the so-called offline-online decomposition. The offline phase corresponds to the construction of the surrogate model and depends on high-dimensional simulations, and thus is expensive. The online phase, where the surrogate model is operated, is usually decoupled from high-dimensional simulations and thus in general is inexpensive. This splitting is feasible if all the quantities in the problem admit e.g. the affine decomposition, which essentially means that all parameter dependencies can be separated from the spatial variables. The recovery of the affine decomposition in the presence of nonlinearities represents an additional challenge and it  usually is treated with the empirical interpolation method (EIM) \cite{barrault2004empirical,grepl2007efficient}. The EIM algorithm requires additional data, i.e. the basis for interpolation is constructed from nonlinearity snapshots in the ``truth" space. For the efficient numerical solution of the reduced-basis problem with Newton's method we extend the computational machinery, proposed in \cite{grepl2007efficient} for semilinear PDEs. It leads to a reduced numerical scheme with full affine decomposition and thus to a considerable acceleration in the online phase, compared to the original finite element simulations. 

\section{The quasilinear parametric elliptic PDE}
\label{sec:2}
\subsection{Abstract formulation}
\label{subsec:2}

We start by introducing the model  for a permanent magnet synchronous machine. We consider a three-phase 6-pole permanent magnet synchronous machine (PMSM) with one buried permanent magnet per pole. We parametrize the problem through the size of the magnet by introducing a three dimensional parameter $p=(p_{1},p_{2},p_{3})$ which characterizes  magnet's width $p_{1}$, magnet's height $p_{2}$ and the perpendicular distance from the magnet to the rotor $p_{3}$ in mm. In fig. 1 the geometry of the problem is shown. PMSM then can be described with sufficient accuracy by the magnetostatic approximation of Maxwell's equations
\begin{align} \label{1.1.1}
    -\nabla \cdot(\nu(x,|\nabla u (p)|)\nabla u(p))=J_{e}-\frac{\partial}{\partial x_{2}}H_{pm,1}(p)+\frac{\partial}{\partial x_{1}}H_{pm,2}(p) \quad \text{in} \quad \Omega(p)
\end{align}
with boundary conditions 
\begin{align*}
\restr{u}{BC}=\restr{u}{DA}=0 \quad \text{and } \quad \restr{u}{AB}=-\restr{u}{CD}.
\end{align*}
Here $AB,BC,CD,DA$ represent parts of the boundary $\partial \Omega$ and marked in Fig.1. We assume that $\Omega(\mu)$ represents the cross-section of the electric motor which is located in the $x_{1}-x_{2}$ plane of $\mathbb{R}^{3}$ and the solution $u$ is the $x_{3}$-component of the magnetic vector potential. The $x_{3}$-component of the current density is represented by $J_{e}$, and $H_{pm,1}(p)$ and $H_{pm,2}(p)$ are components of the permanent magnet magnetic field. The nonlinear magnetic reluctivity function
\begin{align}\label{1.1.2}
  \nu(x,\eta) =
  \begin{cases}
                                   \nu_{1}(\eta), \  \text{for $x \in \Omega^{1}(p)$} \\
                                   \nu_{2}(x),  \  \text{for $x \in \Omega^{2}(p)$}, \\
  \end{cases}
\end{align}
represents ferromagnetic properties of the material. Here we split the domain $\Omega(p)$ into two non-overlapping subdomains $\Omega^{1}(p)$ (ferromagnetic steel) and $\Omega^{2}(p)$ (air, magnet, coils) such that  $\nu_{1} \in C^{1}(\Omega^{1}(p))$ and $\nu_{2}$ is piecewise constant on $\Omega^{2}(p)$ (i.e. constant for each material). In practice, we reconstruct $\nu_{1}$ from the real $B-H$ measurements of PMSM by using cubic spline interpolation. The scheme preserves desired physical properties of the reluctivity function (see, e.g. \cite{heise1994analysis} for the details of the interpolation scheme) and provides the fast-growing nonlinearity of exponential type. We use physical constants for $\nu_{2}$. Then the reluctivity function satisfies 
\begin{align}
   0 < \nu_{\text{LB}} \leq \nu(x,\eta)\leq \nu_{0}, \quad  \forall x \in \Omega(p),
\end{align}
where $\nu_{\text{LB}}$ can be chosen independently of the parameter $p$ (see section \ref{subsec:5} for details). 

We continue with  an abstract formulation of a two-dimensional nonlinear magnetostatic field problem with geometric parametrisation, where the parameter set is given by $\parset \subset \mathbb{R}^{3}$ and describes the geometry of the permanent magnet. The regular, bounded and $p$-dependent domain $\Omega(p) \subset \mathbb{R}^{2}$ gives rise to a $p$-dependent real and separable Hilbert space $X(p):=X(\Omega(p))$  and the corresponding dual space $X'(p):=X'(\Omega(p))$. The function space $X(p)$ is such that 
\begin{align*}
   X(p):=\{ v | \ v \in L^{2}(p), \nabla v \in (L^{2}(p))^{2}, \ \restr{u}{BC}=\restr{u}{DA}=0 \ \text{and} \ \restr{u}{AB}=-\restr{u}{CD} \} 
\end{align*}
with $H_{0}^1(p) \subset X(p) \subset H^1(p)$, where $H^1(p):=\{ v | \ v \in L^{2}(p), \nabla v \in (L^{2}(p))^{2} \}$, $H_{0}^1(p):=\{ v | \ v \in H^1(p), \restr{v}{\partial \Omega}=0 \}$. The inner product on $X(p)$ is defined by $(w,v)_{X(p)}=\int_{\Omega(p)} \nabla w \cdot \nabla v \ dx$   and the induced norm is given by $\lVert v\rVert_{X(p)}=(v,v)^{1/2}_{X(p)}$, which is indeed a norm due to Poincare-Friedrichs inequality. Then the abstract problem reads as follows: for $p \in \parset$, find $u(p) \in X(p)$ satisfies
\begin{align}\label{1.1.3}
    a[u(p)](u(p),v;p)=f(v,p), \quad \forall v \in X(p),
\end{align}
where we have
\begin{align}\label{1.1.4}
a[u](w,v;p)&=\int_{\Omega(p)} \nu(x,|\nabla u|)\nabla w \cdot \nabla v\ dx, \\ f(v;p)&=\int_{\Omega(p)}(J_{e}v-H_{pm,2}\frac{\partial v}{\partial x_{1}}+H_{pm,1}\frac{\partial v}{\partial x_{2}})dx.
\end{align}
The quasilinear form $a[\cdot](\cdot,\cdot;p)$ is strongly monotone on $X(p)$ with monotonicity constant $\nu_{\text{LB}}>0$, i.e.
\begin{align}\label{1.1.5}
    a[v](v,v-w;p)-a[w](w,v-w;p) \geq \nu_{\text{LB}} \lVert v-w \rVert_{X(p)}^{2}  \quad  \forall \, v,w\in X(p),
\end{align}
and Lipschitz continuous on $X(p)$ with Lipschitz constant $3\nu_{0}>0$, i.e.
\begin{align}\label{1.1.6}
|a[u](u,v;p)-a[w](w,v;p)|\leq 3\nu_{0} \lVert u-w \rVert_{X(p)} \lVert v \rVert_{X(p)} \quad  \forall \, u,w,v\in X(p). 
\end{align}
The conditions \eqref{1.1.5}, \eqref{1.1.6}  are established, e.g. in \cite{heise1994analysis}. Then problem \eqref{1.1.3} admits a unique solution (see \cite{zeidler2013nonlinear}, Th 25.B). Moreover, those properties will be needed for the error estimates.

In order to avoid domain re-meshing caused by the change of the parameters, we transfer the domain $\Omega(p)$ to a fixed domain $\hat{\Omega}:=\Omega(\hat{p})$, where $\hat{p}$ is the reference parameter with $\hat{x}:=x(\hat{p})$ as a spatial coordinate on $\hat{\Omega}$ (see e.g. \cite{rozza2007reduced}).  Further we assume that $\hat{\Omega}=\hat{\Omega}^{1} \cup \hat{\Omega}^{2}$ and this can be decomposed into $L=L_{1}+L_{2}$ (in our case $L=12$) non-overlapping triangles (see Fig.1) so that $\hat{\Omega}=\cup_{d=1}^{L} \hat{\Omega}_{d}$ and in particular $\hat{\Omega}^{1}=\cup_{d=1}^{L_{1}} \hat{\Omega}^{1}_{d}$ and $\hat{\Omega}^{2}=\cup_{d=1}^{L_{2}} \hat{\Omega}^{2}_{d}$. The transformation $\Tau(p)$ on each triangle is affine, whereas piecewise-affine and continuous over the whole domain according to:
\begin{align}\label{1.1.7}
   \restr{\Tau(p)}{\hat{\Omega}_{d}}: \hat{\Omega}_{d} &\rightarrow \Omega(p) \\ \nonumber
    \hat{x} &\mapsto C_{d}(p)\hat{x}+z_{d}(p),
\end{align}
for $d=1,...,L$, where $C_{d}(p) \in \mathbb{R}^{2\times 2}$ and $z_{d}(p)\in \mathbb{R}^{2}$. According to \eqref{1.1.7}, the Jacobian matrix $J_{\Tau}(p)$ of the transformation $\Tau(p)$ is constant on each region of the given parametrisation, i.e. we have $\restr{J_{\Tau}(p)}{\hat{\Omega}_{d}}=C_{d}(p)$. 
\begin{figure}
    \centering
    \includegraphics[scale=0.35]{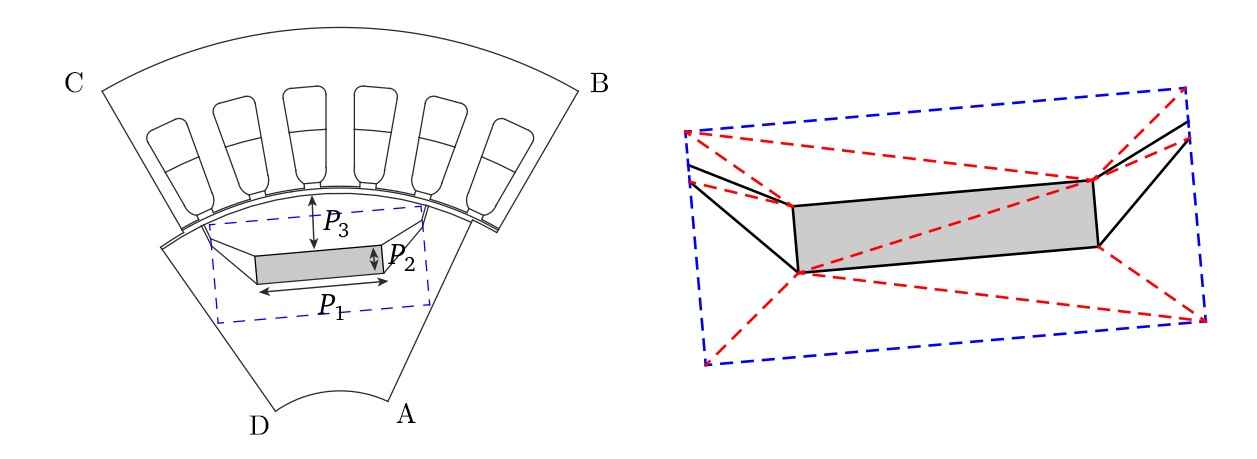}
    \caption{The cross-section of one pole of the machine with the magnet depicted in gray  
    and the region of the geometric parametrisation indicated by the dashed box. The dashed lines indicate the triangulation into $L$ triangles. Figure is adapted from \cite{bontinck2018robust}.}
    \label{fig:my_label}
\end{figure}

Now we state the problem \eqref{1.1.3} on the reference domain $\hat{\Omega}$ with the corresponding Hilbert space $\hat{X}:=X(\hat{p})$ equipped with the inner product $(\hat{w},\hat{v})_{\hat{X}}=\int_{\hat{\Omega}}\nabla \hat{w}\cdot \nabla \hat{v}d\hat{x}$ and the induced norm $\lVert \hat{v}\rVert_{\hat{X}}=(\hat{v},\hat{v})^{1/2}_{\hat{X}}$. It reads as follows: for $p \in \parset$, find $\hat{u}(p) \in \hat{X}$ so that
\begin{align}\label{1.1.8}
    a[\hat{u}(p)](\hat{u}(p),\hat{v};p)=f(\hat{v},p), \quad \forall \hat{v} \in \hat{X},
\end{align}
where the quasilinear form in \eqref{1.1.4} is now transformed with the change of variables formula into 
\begin{align}\label{1.1.9}
a[\hat{u}](\hat{w},\hat{v};p)=\int_{\hat{\Omega}}\nu(\hat{x},|J_{\Tau}^{-T}(p) \nabla \hat{u}|)[J_{\Tau}^{-T}(p)\nabla \hat{w}] \cdot [J_{\Tau}^{-T}(p)\nabla \hat{v}]\lvert \det J_{\Tau}(p) \rvert \ d\hat{x}.
\end{align}
Similarly, the linear form in \eqref{1.1.4} is transformed into
\begin{align}\label{1.1.10}
    f(\hat{v};p)=\int_{\hat{\Omega}}[f\circ \Tau(p)]\hat{v} \lvert \det J_{\Tau}(p) \rvert \ d\hat{x}.
\end{align}
Since $\hat{\Omega}=\hat{\Omega}^{1}\cup \hat{\Omega}^{2}$, we have the decomposition
\begin{align}\label{1.1.11}
    a[\hat{w}](\hat{w},\hat{v};p):=a^{\nu_{1}}[\hat{w}](\hat{w},\hat{v};p)+a^{\nu_{2}}(\hat{w},\hat{v};p), 
\end{align}
where $a^{\nu_{1}}$ is the restriction of \eqref{1.1.9} to $\hat{\Omega}^{1}$ with nonlinear reluctivity function $\nu_{1}$, and $a^{\nu_{2}}$ is the restriction of \eqref{1.1.9} to $\hat{\Omega}^{1}$ with piecewise constant reluctivity function $\nu_{2}$.  Application of Newton's method requires the computation of the derivative of $a^{\nu_{1}}$, which is given by
 \begin{align}\label{eq 1.1.12}
    a'[u](w,v;p) =& \int_{\Omega^{1}(p)}\frac{\nu'_{1}(|\nabla u|)}{\lvert \nabla u \rvert} ( \nabla u \cdot  \nabla w)(\nabla u \cdot \nabla v)dx+a^{\nu_{1}}(w,v;p)
\end{align}
and transformed as in \eqref{1.1.9} to the reference domain $\hat{\Omega}^{1}$ with the change of variables formula.

We then introduce a high dimensional finite element discretization (``truth" approximation) of our problem in the space $\hat{X}_{\mathcal{N}}=\text{span}\{\phi_{1},...,\phi_{\mathcal{N}}\} \subset \hat{X}$  of piecewise linear and continuous finite element functions. The finite element approximation is obtained by a standard Galerkin projection: given the ansatz $\hat{u}_{\mathcal{N}}(p)=\sum_{j=1}^{\mathcal{N}}\hat{u}_{\mathcal{N} \ j}(p) \phi_{j}$ for the discrete solution and testing against the basis elements in $\hat{X}_{\mathcal{N}}$ leads to the system 
\begin{align}\label{eq 1.1.13}
\sum_{j=1}^{\mathcal{N}}A^{\mathcal{N}}_{ij}(p) \hat{{u}}_{\mathcal{N} \ j}(p)=F_{\mathcal{N} \ i}(p), \quad 1\leq i \leq \mathcal{N},
\end{align}
of nonlinear algebraic equations, where $F_{\mathcal{N}}(p) \in \mathbb{R}^{\mathcal{N}},F_{\mathcal{N} \ j}(p)=f(\phi_{j};p),1\leq j \leq \mathcal{N}$ and $A^{\mathcal{N}}(p) \in \mathbb{R}^{\mathcal{N}\times \mathcal{N}}$,  $A^{\mathcal{N}}_{ij}(p)=a[\hat{u}_{\mathcal{N}}(p)](\phi_{j},\phi_{i};p), 1\leq i,j\leq \mathcal{N}$. We then apply a Newton iterative scheme: given a current iterate  $\hat{\bar{u}}_{\mathcal{N} \ j}(p),1 \leq j \leq \mathcal{N}$, we find an increment $\delta \hat{u}_{\mathcal{N} \ j}(p), 1\leq \ j \leq \mathcal{N}$, such that 
\begin{align}\label{eq 1.1.14}
    \sum_{j=1}^{\mathcal{N}}\bar{D}^{\mathcal{N}}_{ij}(p) \delta \hat{u}_{\mathcal{N} \ j}(p)=F_{\mathcal{N} \ i}(p)-\sum_{j=1}^{\mathcal{N}}\bar{A}^{\mathcal{N}}_{ij}(p) \hat{\bar{u}}_{\mathcal{N} \ j}(p), \quad 1\leq i \leq \mathcal{N},
\end{align}
where $\bar{D}^{\mathcal{N}} \in \mathbb{R}^{\mathcal{N}\times \mathcal{N}},\bar{D}^{\mathcal{N}}_{ij}(p)=a'[\hat{\bar{u}}_{\mathcal{N}}](\phi_{j},\phi_{i};p)$ and $\bar{A}^{\mathcal{N}}(p) \in \mathbb{R}^{\mathcal{N}\times \mathcal{N}}, \bar{A}^{\mathcal{N}}_{ij}(p)=a[\hat{\bar{u}}_{\mathcal{N}}](\phi_{j},\phi_{i};p),  1\leq i,j\leq \mathcal{N}$ are computed at each Newton's iteration.

From here onwards by the ``truth" solution $\hat{u}(p)$ we understand its finite element approximation $\hat{u}_{\mathcal{N}}(p)$, assuming that the given finite element approximation is good enough.

\section{Reduced basis approximation}
\subsection{An EIM-RB method}
\label{subsec:3}
To perform the reduced basis approximation, we first introduce a subset $\parset_{train}\subset \parset$ from which a sample $\parset_{N}^{u}=\{ \bar{p}_{1} \in \parset,...,\bar{p}_{N} \in \parset \}$ with associated reduced-basis space $\hat{W}_{N}^{u}=\text{span}\{\zeta_{n}:= \hat{u}(\bar{p}_{n}),\ 1\leq n \leq N \}$ of dimension $N$, which is built with the help of a weak greedy algorithm. This algorithm constructs iteratively nested (Lagrangian) spaces $\hat{W}_{n}^{u},\ 1\leq n \leq N$ using an a-posteriori error estimator $\bigtriangleup_{u}(Y;p)$, which predicts the expected approximation error for a given parameter $p$ in the space $\hat{W}_{n}^{u}=Y$. We want the expected approximation error to be less than the prescribed tolerance $\varepsilon_{RB}$. We initiate the algorithm with an arbitrary chosen parameter $\bar{p}_{1}$ with the corresponding snapshot $\hat{u}(\bar{p}_{1})$ for the basis enrichment. Next we proceed as stated in the following algorithm~\ref{alg:RB-Greedy algorithm}.
\begin{algorithm}[H]
  \caption{RB-Greedy algorithm}
  \label{alg:RB-Greedy algorithm}
  \begin{algorithmic}[1]
  \WHILE {$\varepsilon_{n} :=\underset{p \in \parset_{train}}{\max} \bigtriangleup_{u}(\hat{W}_{n}^{u},p)> \varepsilon_{RB}$}
  \STATE {$\bar{p}_{n}\leftarrow\underset{p \in \parset_{train}}{\mathrm{arg}\max}\bigtriangleup_{u}(\hat{W}_{n-1}^{u},p)$}
  \STATE{$\parset_{n}^{u}\leftarrow \parset_{n-1}^{u}\cup \{\bar{p}_{n}\}$}
  \STATE{$\hat{W}_{n}^{u}\leftarrow \hat{W}_{n-1}^{u} \bigoplus \text{span}\{\zeta_{n}\equiv \hat{u}(\bar{p}_{j})\}$}
  \STATE{$n \leftarrow n+1$}
  \ENDWHILE
  \end{algorithmic}
\end{algorithm}
\noindent
We note that the basis functions $\zeta_{n}$ are also orthonormalized relative to the $(\cdot,\cdot)_{\hat{X}}$ inner product with a Gram-Schmidt procedure to generate a well-conditioned system of equations.

The Empirical Interpolation Method (EIM) \cite{barrault2004empirical} is used to ensure the availability of offline/online decomposition in the presence of the nonlinearity. For the EIM nonlinearity approximation, we construct a sample $\parset_{M}^{\nu}=\{ p_{1}^{\nu} \in \parset,...,p_{M}^{\nu} \in \parset \}$ and associated approximation spaces $W_{M}^{\nu}=\text{span}\{\xi_{m}:= \nu(\hat{u}(p_{m}^{\nu});\hat{x};p_{m}^{\nu}),1\leq m \leq M \}=\text{span}\{q_{1},...,q_{M}\}$ together with a set of interpolation points $T_{M}=\{\hat{x}_{1}^{M},...,\hat{x}_{M}^{M}\}$. Then we build an affine approximation $\nu_{1}^{M}(\hat{u}(p);\hat{x};p)$ of $\nu_{1}(\hat{u}(p);\hat{x};p)$ as
\begin{align}\label{2.1.2}
\nu_{1}(\hat{u}(p);\hat{x};p)=\nu_{1}(\lvert J_{\Tau}^{-T}(\hat{x},p)\nabla \hat{u}(\hat{x},p) \rvert)\approx& \sum_{m=1}^{M}\varphi_{m}(p)q_{m}(\hat{x}) \\ \nonumber
=& \sum_{m=1}^{M}(B_{M}^{-1}\nu_{p})_{m}q_{m}(\hat{x}):= \nu_{1}^{M}(\hat{u}(\mu);\hat{x};p),
\end{align}
where $\nu_{p}:=\{\nu_{1}(\hat{u}(p);\hat{x}_{m}^{M};p) \}_{m=1}^{M} \in \mathbb{R}^{M}$ and $B_{M}\in \mathbb{R}^{M \times M}$ with $(B_{M})_{ij}=q_{j}(\hat{x}_{i}^{M})$ is the interpolation matrix. The EIM algorithm is initiated with an arbitrary chosen sample point $p_{1}^{\nu} \in \parset$ and then associated quantities are computed as follows
\begin{align}\label{2.1.3}
\xi_{1}=\nu(\hat{u}(p_{1}^{\nu});\hat{x};p_{1}^{\nu}),\quad \hat{x}_{1}^{M}= \underset{\hat{x}\in \hat{\Omega}}{\mathrm{arg}\sup}|\xi_{1}(\hat{x})|,\quad q_{1}=\frac{\xi_{1}}{\xi_{1}(\hat{x}_{1}^{M})}.
\end{align}
The next parameters in the sample $S_{M}^{\nu}$ are selected according to the following algorithm ~\ref{alg:EIM algorithm}. 
\begin{algorithm}[H]
  \caption{EIM algorithm}
  \label{alg:EIM algorithm}
  \begin{algorithmic}[1]
  \WHILE{$m \leq M$ and $\delta_{m}^{max}>\epsilon_{EIM}$}  
  \STATE{$ [\delta_{m}^{max},p_{m}^{\nu}] \leftarrow \underset{p \in \parset_{train}}{\mathrm{arg}\max} \underset{z \in W_{m-1}^{\nu}}{\inf} \lVert \nu_{1}(\hat{u}(p);.;p)-z \rVert_{L^{\infty}(\hat{\Omega})} $}
  \STATE{$\parset_{m}^{\nu}\leftarrow \parset_{m-1}^{\nu}\cup \{p_{m}^{\nu}\}$}
  \STATE{$r_{m}(\hat{x})=\nu_{1}(\hat{u}(p_{m}^{\nu});\hat{x};p_{m}^{\nu})-\nu_{1}^{m}(\hat{u}(p_{m}^{\nu});\hat{x};p_{m}^{\nu})$}
  \STATE{$\hat{x}_{m}^{M}=\underset{\hat{x} \in \hat{\Omega}}{\mathrm{arg}\sup}|r_{m}(\hat{x})|, \quad q_{m}=r_{m}/r_{m}(\hat{x}_{m}^{M})$}
   \STATE{$m \leftarrow m+1$}
  \ENDWHILE
  \end{algorithmic}
\end{algorithm}
\noindent

The EIM approximation of $\nu_{1}$ results in the EIM-approximation $a_{M}[\cdot](\cdot,\cdot;p)$ of the quasilinear form $a[\cdot](\cdot,\cdot;p)$ and then the reduced basis approximation is obtained by a standard Galerkin projection: given $p \in \parset$, find $\hat{u}_{N,M}(p) \in \hat{W}_{N}^{u}$ such that
\begin{align}\label{2.1.6}
    a_{M}[\hat{u}_{N,M}(p)](\hat{u}_{N,M}(p),\hat{v}_{N};p)=f(\hat{v}_{N};p), \quad \forall \hat{v}_{N} \in  \hat{W}_{N}^{u}
\end{align}
holds. Since $\hat{\Omega}=\hat{\Omega}^{1}\cup \hat{\Omega}^{2}$, we have the decomposition
\begin{align}\label{2.1.7}
    a_{M}[\hat{w}](\hat{w},\hat{v};p):=a_{M}^{\nu_{1}}[\hat{w}](\hat{w},\hat{v};p)+a^{\nu_{2}}(\hat{w},\hat{v};p),
\end{align}
where $a_{M}^{\nu_{1}}[\cdot](\cdot,\cdot;p)$ is the EIM-approximation of $a^{\nu_{1}}[\cdot](\cdot,\cdot;p)$ with nonlinear reluctivity $\nu_{1}(p)$ replaced by its EIM counterpart ${\nu}_{1}^{M}(p)$.

\subsection{Error estimation}
We define  $W_{N}^{u}(p):=\{w_{N}\ |\ w_{N}= \hat{w}_{N} \circ \Tau^{-1}, \ \hat{w}_{N} \in \hat{W}_{N}^{u}  \}$ as a push-forward reduced-basis space over the parametrised domain $\Omega(p)$ for error estimation purposes, where $\Tau^{-1}$ is the inverse of the geometric transformation \eqref{1.1.7}. First we study the convergence of $\hat{u}_{N,M}(p)\rightarrow \hat{u}(p)$. 
\begin{prop}[A-priori Error Bound]
Assume that the EIM-approximation error of the nonlinearity satisfies $\sup_{p \in \parset} \lVert \nu(p) - \nu^{M}(p) \rVert_{L^{\infty}} \leq \epsilon_{M}$. Assume further that $a(\cdot;\cdot,\cdot;p)$ is Lipschitz continuous on $X(p)$ with Lipschitz constant $3\nu_{0}>0$  and that the EIM-approximation $a_{M}(\cdot;\cdot,\cdot;p)$ of $a(\cdot;\cdot,\cdot;p)$ is strongly monotone with monotonicity constant $\tilde{\nu}_{\text{LB}}:=\nu_{\text{LB}}-\epsilon_{a}>0$. Then we have
 \begin{align}
 \lVert \hat{u}(p)-\hat{u}_{N,M}(p)\rVert_{\hat{X}} \leq \sqrt{\frac{C_{2}(p)}{C_{1}(p)}}\inf_{\hat{w}_{N}\in \hat{W}_{N}^{u}}\{\left( 1+\frac{3\nu_{0}}{\tilde{\nu}_{\text{LB}}}\right)\lVert \hat{u}(p)-\hat{w}_{N}\rVert_{\hat{X}}+\frac{\epsilon_{M}}{\tilde{\nu}_\text{LB}} \lVert \hat{w}_{N} \rVert_{\hat{X}}\} 
 \end{align}
 with the geometric constants
\begin{align}\label{const1}
    C_{1}(p):= \underset{1\leq d\leq L}{\min}\{\lambda_{min}(C_{d}(p)^{-1}C_{d}(p)^{-T})|\det C_{d}(p)|\}
\end{align}
and
\begin{align}\label{const2}
    C_{2}(p):= \underset{1\leq d\leq L}{\max}\{\lambda_{max}(C_{d}(p)^{-1}C_{d}(p)^{-T})|\det C_{d}(p)|\}
\end{align}
\end{prop}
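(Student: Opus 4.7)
The plan is to mimic the classical Céa-type argument for strongly monotone operators, adapted to account for both (i) the Galerkin projection onto $\hat{W}_{N}^{u}$ and (ii) the EIM perturbation of the nonlinearity, and finally to transport the resulting $X(p)$-norm bound to the reference-configuration norm $\|\cdot\|_{\hat{X}}$ via the geometric constants $C_{1}(p),C_{2}(p)$.

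Concretely, I would first push everything to $\Omega(p)$ and let $w_{N}\in W_{N}^{u}(p)$ be the pull-back of an arbitrary $\hat{w}_{N}\in \hat{W}_{N}^{u}$. Writing $e:=w_{N}-u_{N,M}(p)\in W_{N}^{u}(p)$, I invoke the strong monotonicity of $a_{M}$ (with constant $\tilde{\nu}_{\text{LB}}$) to get
\begin{equation*}
\tilde{\nu}_{\text{LB}}\lVert e\rVert_{X(p)}^{2}\leq a_{M}[w_{N}](w_{N},e;p)-a_{M}[u_{N,M}](u_{N,M},e;p).
\end{equation*}
The second term is eliminated by Galerkin orthogonality: since $e\in W_{N}^{u}(p)$, both $a_{M}[u_{N,M}](u_{N,M},e;p)$ and $a[u](u,e;p)$ equal $f(e;p)$, so I may replace the $a_{M}$-term by $a[u](u,e;p)$. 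Then I insert $\pm a[w_{N}](w_{N},e;p)$ and split into the EIM consistency error $a_{M}[w_{N}](w_{N},e;p)-a[w_{N}](w_{N},e;p)$, bounded pointwise using $|\nu-\nu^{M}|\leq \epsilon_{M}$ by $\epsilon_{M}\lVert w_{N}\rVert_{X(p)}\lVert e\rVert_{X(p)}$, and the continuity error $a[w_{N}](w_{N},e;p)-a[u](u,e;p)$, bounded via the Lipschitz estimate \eqref{1.1.6} by $3\nu_{0}\lVert u-w_{N}\rVert_{X(p)}\lVert e\rVert_{X(p)}$. Dividing by $\lVert e\rVert_{X(p)}$ and using the triangle inequality $\lVert u-u_{N,M}\rVert_{X(p)}\leq \lVert u-w_{N}\rVert_{X(p)}+\lVert e\rVert_{X(p)}$ yields
\begin{equation*}
\lVert u(p)-u_{N,M}(p)\rVert_{X(p)}\leq \Bigl(1+\tfrac{3\nu_{0}}{\tilde{\nu}_{\text{LB}}}\Bigr)\lVert u(p)-w_{N}\rVert_{X(p)}+\tfrac{\epsilon_{M}}{\tilde{\nu}_{\text{LB}}}\lVert w_{N}\rVert_{X(p)}.
\end{equation*}

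It remains to pass from the $X(p)$-norm to the $\hat{X}$-norm. Using the piecewise-affine change of variables $\Tau(p)$ with $\restr{J_{\Tau}(p)}{\hat{\Omega}_{d}}=C_{d}(p)$, one expands $\int_{\Omega(p)}|\nabla v|^{2}\,dx=\sum_{d}\int_{\hat{\Omega}_{d}}\nabla\hat{v}^{\top}(C_{d}^{-1}C_{d}^{-\top})\nabla\hat{v}\,|\det C_{d}|\,d\hat{x}$, and the Rayleigh-quotient bounds with the extremal eigenvalues of $C_{d}^{-1}C_{d}^{-\top}$ give $C_{1}(p)\lVert\hat{v}\rVert_{\hat{X}}^{2}\leq \lVert v\rVert_{X(p)}^{2}\leq C_{2}(p)\lVert\hat{v}\rVert_{\hat{X}}^{2}$ for every $\hat{v}\in\hat{X}$ with push-forward $v$. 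Applying the lower bound to the left-hand side and the upper bound to both $\lVert u-w_{N}\rVert_{X(p)}$ and $\lVert w_{N}\rVert_{X(p)}$ on the right yields the factor $\sqrt{C_{2}(p)/C_{1}(p)}$. Taking the infimum over $\hat{w}_{N}\in\hat{W}_{N}^{u}$ gives the claim.

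The main obstacle is the middle step: one must be a little careful that the EIM error bound $\epsilon_{M}$ applies not only to the nonlinearity evaluated at the snapshots $\hat{u}(p)$ (where it was constructed) but also at the auxiliary element $w_{N}$ appearing in the monotonicity argument; this is precisely the content of the uniform hypothesis $\sup_{p}\lVert \nu(p)-\nu^{M}(p)\rVert_{L^{\infty}}\leq\epsilon_{M}$ together with the assumption that $a_{M}$ remains strongly monotone with a slightly deteriorated constant $\tilde{\nu}_{\text{LB}}$. The rest is bookkeeping: the part $a^{\nu_{2}}$ is unaffected by EIM and cancels identically in $a_{M}[w_{N}]-a[w_{N}]$, so only the restriction to $\hat{\Omega}^{1}$ enters the EIM term, and the final bound is indeed uniform on $\parset$.
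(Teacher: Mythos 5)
Your proposal is correct and follows essentially the same route as the paper's proof: strong monotonicity of $a_{M}$ applied to $u_{N,M}-w_{N}$, elimination of the $a_{M}[u_{N,M}]$ term via the reduced and truth weak equations, a $\pm a[w_{N}]$ splitting into a Lipschitz term and an EIM consistency term bounded by $\epsilon_{M}$, followed by the triangle inequality and the norm-equivalence constants $C_{1}(p),C_{2}(p)$ to pass to the $\hat{X}$-norm. No substantive differences.
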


\begin{proof} 
\smartqed
Set $u:=u(p) \in X(p),\ u_{N,M}:=u_{N,M}(p) \in W_{N}^{u}(p)$ and let $w_{N} \in W_{N}^{u}(p)$ be arbitrary. We use the strong monotonicity condition and Lipschitz continuity to obtain the bound 
\begin{align*}
 \tilde{\nu}_{\text{LB}} \lVert u_{N,M}- w_{N} \rVert_{X(p)}^{2} \leq & \ a_{M}[u_{N,M}](u_{N,M},u_{N,M}-w_{N})-a_{M}[w_{N}](w_{N},u_{N,M}-w_{N}) \\
\leq  & \ a[u](u,u_{N,M}-w_{N})-a[w_{N}](w_{N},u_{N,M}-w_{N}) \\  + & \ |a[w_{N}](w_{N},u_{N,M}-w_{N})-a_{M}[w_{N}](w_{N},u_{N,M}-w_{N})| \\
\leq & \ 3\nu_{0}\lVert u- w_{N} \rVert_{X(p)}\lVert u_{N,M}- w_{N} \rVert_{X(p)}\\
+& \sup_{p \in \parset} \lVert \nu(p) - \nu^{M}(p) \rVert_{L^{\infty}}\lVert w_{N} \rVert_{X(p)}\lVert u_{N,M}-w_{N} \rVert_{X(p)}.
\end{align*}
Dividing both sides by $\tilde{\nu}_\text{LB}\lVert u_{N,M}-w_{N} \rVert_{X(p)}$ and using the triangle inequality 
\begin{align*}
    \lVert u-u_{N,M}\rVert_{X(p)} \leq \lVert u-w_{N}\rVert_{X(p)}+\lVert u_{N,M}-w_{N}\rVert_{X(p)},
\end{align*}
we obtain the estimate
\begin{align}\label{3.2.2}
 \lVert u(p)-u_{N,M}(p)\rVert_{X(p)} \leq \left( 1+\frac{3\nu_{0}}{\tilde{\nu}_{\text{LB}}}\right)\lVert u(p)-w_{N}\rVert_{X(p)}+\frac{\epsilon_{M}}{\tilde{\nu}_\text{LB}} \lVert w_{N} \rVert_{X(p)}. 
 \end{align}
Inspecting the geometric dependence with the lower bound
 \begin{align}\label{3.2.3}
    \lVert v \rVert_{X(p)}^{2}=&\sum_{d=1}^{L}\sum_{i,j=1}^{2}[C_{d}(p)^{-1}C_{d}(p)^{-T}]_{ij}|\det C_{d}(p)| \int_{\hat{\Omega}_{d}}\frac{\partial \hat{v}}{\partial \hat{x}_{i}}\frac{\partial \hat{v}}{\partial \hat{x}_{j}}\ d\hat{x}\\ \nonumber
    \geqslant& \underset{1\leq d\leq L}{\min}\{\lambda_{min}(C_{d}(p)^{-1}C_{d}(p)^{-T})|\det C_{d}(p)|\}  \ \lVert \hat{v} \rVert_{\hat{X}}^{2}=C_{1}(p)\lVert \hat{v} \rVert_{\hat{X}}^{2},
\end{align}
applied to the left-hand side of \eqref{3.2.2}, together with the similarly established upper bound
 \begin{align} \label{3.2.4}
    \lVert v \rVert_{X(p)}^{2}\leq  \underset{1\leq d\leq L}{\max}\{\lambda_{max}(C_{d}(p)^{-1}C_{d}(p)^{-T})|\det C_{d}(p)|\} \ \lVert \hat{v} \rVert_{\hat{X}}^{2}=C_{2}(p)\lVert \hat{v} \rVert_{\hat{X}}^{2}
\end{align}
applied to the right-hand side of \eqref{3.2.2},  the desired result follows after a short calculation.
\qed 
\end{proof}

 For efficient implementation of the reduced basis methodology and the verification of the error, it is necessary to provide an a-posteriori error bound, which can be quickly evaluated.  For this we establish an error bound based on the residual. We denote by $r_{M}(\cdot;p) \in \hat{X}'$  the residual (formed on the reference domain) of the problem, defined naturally as
\begin{align}\label{3.2.5}
    r_{M}(\hat{v};p)=f(\hat{v};p)-a_{M}[\hat{u}_{N,M}](\hat{u}_{N,M},\hat{v};p).
\end{align}
 We have the following 
\begin{prop}[A-posteriori Error Bound]
 Let $\nu_{\text{LB}}>0$ be the lower bound of the monotonicity constant. Then, the RB-EIM error $\hat{e}_{N,M}(p):=\hat{u}(p)-\hat{u}_{N,M}(p)$ can be bounded by
 \begin{align}\label{3.2.6}
\lVert \hat{e}_{N,M}(p)\rVert_{\hat{X}}\leq \frac{1}{\nu_{\text{LB}}\ C_{1}(p)}(\lVert r_{M}(\cdot;p)\rVert_{\hat{X}'}+C_{2}(p)\delta_{M}(p)\lVert \hat{u}_{N,M}(p) \rVert_{\hat{X}}):=\bigtriangleup_{N,M}(p)
 \end{align}
with the geometric constants \eqref{const1}, \eqref{const2} and the EIM-approximation error
\begin{align*}
\delta_{M}(p)=\sup_{\hat{x} \in \hat{\Omega}}|\nu(\lvert J_{\Tau}^{-T}(\hat{x},p)\nabla \hat{u}(\hat{x};p) \rvert)-\nu^{M}(\lvert J_{\Tau}^{-T}(\hat{x},p)\nabla \hat{u}(\hat{x};p) \rvert)|
\end{align*}
 of the nonlinearity 
\end{prop}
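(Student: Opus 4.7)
The plan is to mimic the classical residual-based a-posteriori argument for coercive linear problems, with the strong monotonicity \eqref{1.1.5} playing the role of coercivity and an extra defect term accounting for the EIM error. I will carry out the monotonicity estimate on the parametrized physical domain $\Omega(p)$, where the constant $\nu_{\text{LB}}$ is available directly, and only at the end transfer to the reference-domain norm $\|\cdot\|_{\hat{X}}$ via the geometric constants $C_{1}(p)$ and $C_{2}(p)$ already introduced in the proof of the a-priori bound.

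First I set $e := u(p) - u_{N,M}(p) \in X(p)$ and apply \eqref{1.1.5} with $v = u$, $w = u_{N,M}$ to obtain $\nu_{\text{LB}}\|e\|_{X(p)}^{2} \leq a[u](u,e;p) - a[u_{N,M}](u_{N,M},e;p)$. Since $u$ solves \eqref{1.1.3}, the first term equals $f(e;p)$, and inserting $\pm a_{M}[u_{N,M}](u_{N,M},e;p)$ decomposes the right-hand side into a true residual piece $\bigl(f(e;p) - a_{M}[u_{N,M}](u_{N,M},e;p)\bigr)$ and an EIM defect piece $\bigl(a_{M}[u_{N,M}](u_{N,M},e;p) - a[u_{N,M}](u_{N,M},e;p)\bigr)$. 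Because the change of variables \eqref{1.1.9}--\eqref{1.1.10} is exact, the first bracket equals $r_{M}(\hat{e}_{N,M};p)$ from \eqref{3.2.5}, so by definition of the dual norm it is bounded by $\|r_{M}(\cdot;p)\|_{\hat{X}'}\|\hat{e}_{N,M}\|_{\hat{X}}$.

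For the defect bracket I would write the integral on $\hat{\Omega}$ using \eqref{1.1.9}, factor out the pointwise difference $\nu - \nu^{M}$, bound it uniformly by $\delta_{M}(p)$, and apply Cauchy--Schwarz to recognise the remaining integral as an inner product on $X(p)$, yielding $\delta_{M}(p)\|u_{N,M}\|_{X(p)}\|e\|_{X(p)}$. To finish, I invoke the geometric bounds \eqref{3.2.3}--\eqref{3.2.4}: the lower bound gives $C_{1}(p)\|\hat{e}_{N,M}\|_{\hat{X}}^{2}\leq \|e\|_{X(p)}^{2}$ on the left, and the upper bound applied twice gives $\|u_{N,M}\|_{X(p)}\|e\|_{X(p)} \leq C_{2}(p)\|\hat{u}_{N,M}\|_{\hat{X}}\|\hat{e}_{N,M}\|_{\hat{X}}$ on the right. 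Collecting everything produces $\nu_{\text{LB}}C_{1}(p)\|\hat{e}_{N,M}\|_{\hat{X}}^{2} \leq \|r_{M}\|_{\hat{X}'}\|\hat{e}_{N,M}\|_{\hat{X}} + C_{2}(p)\delta_{M}(p)\|\hat{u}_{N,M}\|_{\hat{X}}\|\hat{e}_{N,M}\|_{\hat{X}}$, and dividing by $\|\hat{e}_{N,M}\|_{\hat{X}}$ gives exactly \eqref{3.2.6}.

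The one point requiring care is the treatment of the EIM defect: $\delta_{M}(p)$ as stated uses the gradient of the \emph{truth} solution $\hat{u}(p)$, whereas what naturally arises in the defect integral is $\nu(|J_{\Tau}^{-T}\nabla \hat{u}_{N,M}|) - \nu^{M}(|J_{\Tau}^{-T}\nabla \hat{u}_{N,M}|)$, evaluated at the reduced-basis iterate rather than the truth. I would interpret $\delta_{M}$ as an $L^{\infty}$-bound on $\nu_{1}-\nu_{1}^{M}$ that is uniform in its scalar argument (which is how the analogous uniform bound $\epsilon_{M}$ is used in the a-priori estimate), so that the pointwise inequality required above holds irrespective of where the nonlinearity is evaluated. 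The rest of the argument is routine bookkeeping with the two geometric constants.
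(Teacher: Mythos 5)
Your proof is correct and follows essentially the same route as the paper: strong monotonicity applied on $\Omega(p)$, insertion of $\pm\, a_{M}[u_{N,M}](u_{N,M},e;p)$ to split the right-hand side into the residual term and the EIM defect term, bounding the defect pointwise by $\delta_{M}(p)$ with Cauchy--Schwarz, and then transferring to the $\hat{X}$-norms via the geometric constants $C_{1}(p)$, $C_{2}(p)$ before dividing by $\lVert \hat{e}_{N,M}\rVert_{\hat{X}}$. Your remark on the argument of $\delta_{M}(p)$ is apt: the defect actually involves the nonlinearity evaluated at $\hat{u}_{N,M}$ rather than $\hat{u}$ (consistent with the computable quantity \eqref{3.3.9}), and the paper implicitly relies on the same uniform reading you adopt.
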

\begin{proof}
\smartqed
Since in the case $e_{N,M}=0$ there is nothing to show, we assume that $e_{N,M} \neq 0$. We then use strong monotonicity condition \eqref{1.1.5} and the definition of the residual \eqref{3.2.5} to estimate
\begin{align*}
\nu_{\text{LB}} \lVert e_{N,M} \rVert_{X(p)}^{2}\leq   a[u](u,e_{N,M})-a[u_{N,M}](u_{N,M},e_{N,M}) \\
=f(e_{N,M})-a_{M}[u_{N,M}](u_{N,M},e_{N,M})+ a_{M}[u_{N,M}](u_{N,M},e_{N,M})-a[u_{N,M}](u_{N,M},e_{N,M})\\
:=r_{M}(e_{N,M})+a_{M}[u_{N,M}](u_{N,M},e_{N,M})- a[u_{N,M}](u_{N,M},e_{N,M})\\
=r_{M}(\hat{e}_{N,M})+a_{M}[u_{N,M}](u_{N,M},e_{N,M})- a[u_{N,M}](u_{N,M},e_{N,M})\\
\leq \lVert r_{M} \rVert_{\hat{X}'}\lVert \hat{e}_{N,M} \rVert_{\hat{X}}+ \delta_{M}(p) \lVert u_{N,M}\rVert_{X(p)}\lVert e_{N,M} \rVert_{X(p)} 
\end{align*}
Now the final result follows from the estimate \eqref{3.2.3} and \eqref{3.2.4}, applied to $\lVert e_{N,M} \rVert_{X(p)}^{2}$ and the right-hand side of the inequality, correspondingly.
\qed 
\end{proof}
We address the computational realization of the estimator \eqref{3.2.6} in the next section. Next we denote by $r(\cdot;p) \in \hat{X}'$  the residual of the original problem (without EIM reduction), defined as
\begin{align}\label{3.2.7}
    r(\hat{v};p)=f(\hat{v};p)-a[\hat{u}_{N}](\hat{u}_{N},\hat{v};p)
\end{align}
and let $\hat{e}_{N}(p):=\hat{u}(p)-\hat{u}_{N}(p)$ be the error of the reduced-basis approximation. Along the lines of proposition 3.2 one can prove the error bound
\begin{align}\label{3.2.8}
\lVert \hat{e}_{N}(p)\rVert_{\hat{X}}\leq \frac{\lVert r(\cdot;p)\rVert_{\hat{X}'}}{\nu_{\text{LB}}\ C_{1}(p)}:=\bigtriangleup_{N}(p).
\end{align}
We use \eqref{3.2.8} to investigate the factor of overestimation in the reduced-basis approximation. 
\begin{prop}[Effectivity bound for RB-approximation] Let $\eta_{N}(p)=\frac{\bigtriangleup_{N}(p)}{\lVert \hat{e}_{N} \rVert_{\hat{X}}}$. Then
\begin{align}\label{3.2.9}
   \eta_{N}(p) \leq \frac{3\nu_{0}}{\nu_{\text{LB}}}\sqrt{C_{1}(p)C_{2}(p)}
\end{align}

\end{prop}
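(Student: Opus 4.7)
The plan is to reduce the effectivity bound to a residual-versus-error bound. From \eqref{3.2.8} we have $\Delta_{N}(p) = \|r(\cdot;p)\|_{\hat{X}'}/(\nu_{\text{LB}}\,C_{1}(p))$, so that
\[ \eta_{N}(p) \;=\; \frac{\|r(\cdot;p)\|_{\hat{X}'}}{\nu_{\text{LB}}\,C_{1}(p)\,\|\hat{e}_{N}\|_{\hat{X}}}, \]
and the task collapses to bounding $\|r(\cdot;p)\|_{\hat{X}'}$ by a linear multiple of $\|\hat{e}_{N}\|_{\hat{X}}$ with the correct geometric prefactor. This is the mirror image of Proposition 3.2, in which strong monotonicity was used to bound the error by the residual; here Lipschitz continuity is used in the opposite direction.

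The key steps would be as follows. First, exploit Galerkin: since $\hat{u}(p)$ satisfies \eqref{1.1.8}, we have $f(\hat{v};p) = a[\hat{u}](\hat{u},\hat{v};p)$, so the residual defined in \eqref{3.2.7} coincides with the difference of quasilinear forms evaluated at the truth and RB solutions,
\[ r(\hat{v};p) \;=\; a[\hat{u}](\hat{u},\hat{v};p) - a[\hat{u}_{N}](\hat{u}_{N},\hat{v};p). \]
Second, push this identity forward to the parametric domain $\Omega(p)$ via $\Tau(p)$ (writing $e_{N} = \hat{e}_{N}\circ\Tau^{-1}$, $v = \hat{v}\circ\Tau^{-1}$) and apply the Lipschitz continuity \eqref{1.1.6} of the quasilinear form on $X(p)$ to obtain the pointwise-in-$\hat{v}$ estimate
\[ |r(\hat{v};p)| \;\leq\; 3\nu_{0}\,\|e_{N}(p)\|_{X(p)}\,\|v\|_{X(p)}. \]
Third, convert both $X(p)$-norms back to $\hat{X}$-norms by means of the geometric estimates \eqref{3.2.3}--\eqref{3.2.4}, take the supremum over $\hat{v}\in\hat{X}$, and insert the resulting dual-norm bound into the expression for $\eta_{N}(p)$.

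The main obstacle is the last step: extracting precisely the factor $\sqrt{C_{1}(p)\,C_{2}(p)}$ rather than the coarser combination that a two-sided use of the upper bound \eqref{3.2.4} would produce. The routine Lipschitz estimate naturally pairs two factors of $\sqrt{C_{2}(p)}$ in the numerator against the single $C_{1}(p)$ already sitting in the denominator of $\Delta_{N}(p)$; to recover the stated symmetric constant one must balance \eqref{3.2.3} and \eqref{3.2.4} across the two norm factors, so that the scaling is distributed evenly between numerator and denominator. This is a bookkeeping exercise rather than a conceptual difficulty, and once it is discharged the claimed effectivity bound $\eta_{N}(p)\leq (3\nu_{0}/\nu_{\text{LB}})\sqrt{C_{1}(p)\,C_{2}(p)}$ follows by a short calculation.
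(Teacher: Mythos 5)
Your route is the same as the paper's: using \eqref{1.1.8} you rewrite the residual \eqref{3.2.7} as $r(\hat{v};p)=a[\hat{u}](\hat{u},\hat{v};p)-a[\hat{u}_{N}](\hat{u}_{N},\hat{v};p)$, push forward to $\Omega(p)$, invoke the Lipschitz bound \eqref{1.1.6}, and return to $\hat{X}$ via the norm equivalences \eqref{3.2.3}--\eqref{3.2.4}. The only cosmetic difference is that the paper evaluates the dual norm through the Riesz representative $\hat{v}_{r}$ of $r(\cdot;p)$ and tests with $v_{r}=\hat{v}_{r}\circ\Tau^{-1}$, whereas you take a supremum over $\hat{v}\in\hat{X}$; these are equivalent.

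The genuine gap is precisely the step you deferred as ``bookkeeping.'' In $|r(\hat{v};p)|\le 3\nu_{0}\lVert e_{N}\rVert_{X(p)}\lVert v\rVert_{X(p)}$ both physical norms must be estimated \emph{from above} to obtain an upper bound on $\lVert r(\cdot;p)\rVert_{\hat{X}'}$, so both factors are forced through \eqref{3.2.4}: $\lVert e_{N}\rVert_{X(p)}\le\sqrt{C_{2}(p)}\lVert\hat{e}_{N}\rVert_{\hat{X}}$ and $\lVert v\rVert_{X(p)}\le\sqrt{C_{2}(p)}\lVert\hat{v}\rVert_{\hat{X}}$; there is no freedom to ``balance'' \eqref{3.2.3} against \eqref{3.2.4} across the two factors, because \eqref{3.2.3} points in the wrong direction for this purpose. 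What your argument actually delivers is $\lVert r(\cdot;p)\rVert_{\hat{X}'}\le 3\nu_{0}\,C_{2}(p)\,\lVert\hat{e}_{N}\rVert_{\hat{X}}$ and hence $\eta_{N}(p)\le\frac{3\nu_{0}}{\nu_{\text{LB}}}\frac{C_{2}(p)}{C_{1}(p)}$, which coincides with the stated constant $\sqrt{C_{1}(p)C_{2}(p)}$ only in the unscaled case $C_{1}(p)=C_{2}(p)=1$; asserting that \eqref{3.2.9} then ``follows by a short calculation'' leaves the proposition unproven as stated. In fairness, the paper's own proof is loose at the same spot: the identity $\langle v_{r},v_{r}\rangle_{X(p)}=r(v_{r};p)$ silently mixes the $X(p)$ and $\hat{X}$ inner products, and its intermediate estimate $\lVert\hat{v}_{r}\rVert_{\hat{X}}/\lVert\hat{e}_{N}\rVert_{\hat{X}}\le 3\nu_{0}\sqrt{C_{2}(p)/C_{1}(p)}$ combined with \eqref{3.2.8} yields $\eta_{N}(p)\le\frac{3\nu_{0}}{\nu_{\text{LB}}}\frac{1}{C_{1}(p)}\sqrt{C_{2}(p)/C_{1}(p)}$, which equals the claimed bound only if $C_{1}(p)=1$. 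So your instinct that the symmetric constant does not drop out of the routine estimate was correct; the fix is to carry out the estimate honestly and state the effectivity bound with the constant it actually produces (e.g.\ $\frac{3\nu_{0}}{\nu_{\text{LB}}}\,C_{2}(p)/C_{1}(p)$), not to defer the discrepancy to an unspecified calculation.
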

\noindent
\begin{proof}
\smartqed
Let $\hat{v}_{r} \in \hat{X}$ denote the Riesz-representative of $r(\cdot;p)$. Then we have
\begin{align*}
\langle \hat{v}_{r},\hat{v}\rangle_{\hat{X}}=r(\hat{v};p), \ \hat{v} \in \hat{X}, \quad  \lVert \hat{v}_{r} \rVert_{\hat{X}}=\lVert r(\cdot;p) \rVert_{\hat{X}'}.   
\end{align*}
Now let $ v_{r}:= \hat{v}_{r} \circ \Tau^{-1} \in X(p)$. Then, using Lipshitz continuity of \eqref{1.1.6}, we have
 \begin{align*}
\lVert v_{r} \rVert_{X(p)}^{2}=\langle v_{r},v_{r} \rangle_{X(p)}=r(v_{r};\mu)&=a[u](u,v_{r};p)-a[u_{N}](u_{N},v_{r};p) \\
&\leq 3\nu_{0} \lVert e_{N} \rVert_{X(p)} \lVert v_{r} \rVert_{X(p)}.
\end{align*}
With the estimates \eqref{3.2.3} and \eqref{3.2.4}, applied to both sides of this inequality, we obtain
\begin{align*}
\frac{\lVert \hat{v}_{r} \rVert_{\hat{X}}}{\lVert \hat{e}_{N} \rVert_{\hat{X}}}\leq 3\nu_{0} \sqrt{\frac{C_{2}(p)}{C_{1}(p)}}.
\end{align*}
With \eqref{3.2.8} we then conclude
\begin{align*}
  \eta_{N}(p)=\frac{\bigtriangleup_{N}(p)}{\lVert \hat{e}_{N} \rVert_{\hat{X}}}=\frac{\lVert \hat{v}_{r} \rVert_{\hat{X}}}{\nu_{\text{LB}} \ C_{1}(p)\lVert \hat{e}_{N} \rVert_{\hat{X}}}\leq \frac{3\nu_{0}}{\nu_{\text{LB}}}\sqrt{C_{1}(p)C_{2}(p)}.
\end{align*}
and obtain the effectivity bound.
\qed 
\end{proof}
This bound is further used to explain the gap between the true error and the estimator.

\subsection{Computational procedure}
\label{subsec:4}
The computational process in the reduced basis modelling can be split into the offline and the online phase. The computations in the offline phase depend on the dimension $\mathcal{N}$ of the finite element space and are  expensive, but should be performed only once. The computations in the online phase are independent of $\mathcal{N}$, with computational complexity which depends only on the the dimension $N$ of the reduced-basis approximation space and the dimension $M$ of the EIM approximation space. The key concept utilized here is parameter-separability (or affine decomposition) of all the forms involved in the problem. With EIM we can achieve an affine decomposition of the quasilinear form 
\begin{align} \label{3.3.1}
a_{M}^{\nu_{1}}[\hat{u}_{N,M}(p)](\hat{w},\hat{v};p)&=\sum_{m=1}^{M}\sum_{d=1}^{L_{1}}\sum_{i,j=1}^{2}\varphi_{m}(p) \Phi_{d,L_{1}}^{i,j}(p)a_{m,d}^{i,j}(\hat{w},\hat{v}), \\ \nonumber a^{\nu_{2}}(\hat{w},\hat{v};p)&=\sum_{d=1}^{L_{2}}\sum_{i,j=1}^{2}\Phi_{d,L_{2}}^{i,j}(p)a_{d}^{i,j}(\hat{w},\hat{v}), 
\end{align}
such that $\Phi_{d,L_{1}}^{i,j}:\parset \rightarrow \mathbb{R}$ for $d=1,...,L_{1},i,j=1,2$ and $\Phi_{d,L_{2}}^{i,j}:\parset \rightarrow \mathbb{R}$ for $d=1,...,L_{2},i,j=1,2$ are functions depending on  $p$ and on the parameter independent forms 
\begin{align*}
 a_{m,d}^{i,j}(\hat{w},\hat{v})&=\int_{\hat{\Omega}^{1}_{d}}q_{m}\frac{\partial \hat{w}}{\partial \hat{x}_{i}}\frac{\partial \hat{v}}{\partial \hat{x}_{j}}\ d\hat{x} , \ 1\leq d \leq L_{1}, \ 1\leq i,j \leq 2 ,\\
a_{d}^{i,j}(\hat{w},\hat{v})&=\int_{\hat{\Omega}_{d}^{2}}\frac{\partial \hat{w}}{\partial \hat{x}_{i}}\frac{\partial \hat{v}}{\partial \hat{x}_{j}}\ d\hat{x}, \ 1\leq d \leq L_{2}, \ 1\leq i,j \leq 2.
\end{align*}
For notational convenience, we set $c_{m}(\hat{w},\hat{v};p):=\sum_{d=1}^{L_{1}}\sum_{i,j=1}^{2} \Phi_{d,L_{1}}^{i,j}(p)a_{m,d}^{i,j}(\hat{w},\hat{v})$, so that
\begin{align*}
 a_{M}^{\nu_{1}}[\hat{u}_{N,M}(p)](\hat{w},\hat{v};p)=\sum_{m=1}^{M} \varphi_{m}(p)c_{m}(\hat{w},\hat{v};p).   
\end{align*}
Similarly, the affine decomposition of  $f$  has the form 
\begin{align*}
    f(\hat{v};p)=\int_{\hat{\Omega}}J_{e}\hat{v}\ d\hat{x}-\sum_{d=1}^{L}\sum_{i=1}^{2}|\det C_{d}(p)|C_{d}(p)^{-T}_{1\ i}\int_{\hat{\Omega}_{d}}H_{pm,1} \frac{\partial \hat{v}}{\partial \hat{x}_{i}}d\hat{x}\\
    +\sum_{d=1}^{L} \sum_{i=1}^{2}|\det C_{d}(p)|C_{d}(p)^{-T}_{2\ i}\int_{\hat{\Omega}_{d}} H_{pm,2}\frac{\partial \hat{v}}{\partial \hat{x}_{i}}d\hat{x}=\sum_{q=1}^{Q_{f}}\Phi_{q}^{f}(p)f_{q}(\hat{v}),
\end{align*}
where $\Phi_{q}^{f}:\parset \rightarrow \mathbb{R}$ for $q=1,...,Q_{f}$ are parameter dependent functions and parameter independent forms $f_{q}(\hat{v})$.

We now give the details of the numerical scheme for the nonlinear part, defined on the domain $\hat{\Omega}^{1}$. The second term  in \eqref{3.3.1} is linear and can be treated similarly. We expand our reduced basis solution as $\hat{u}_{N,M}(p)=\sum_{j=1}^{N}\hat{u}_{N,M \ j}\zeta_{j}$ and test against the basis elements in $\hat{W}_{N}^{u}$ to obtain the algebraic equations
\begin{align}\label{3.3.2}
    \sum_{j=1}^{N}\sum_{m=1}^{M} \varphi_{m}(p)C^{j(N,M)}_{i\ m}(p)\hat{u}_{N,M\ j}(p)=F_{N\ i}(p), \quad 1\leq i \leq N,
\end{align}
where $C^{j(N,M)}(p) \in \mathbb{R}^{N\times M},C^{j(N,M)}_{i\ m}(p)=c_{m}(\zeta_{j},\zeta_{i};p),1\leq i \leq N, 1\leq m \leq M,1\leq j \leq N$, and $F_{N\ i}(p)=f(\zeta_{i};p)$. Since $\varphi_{M}(p)=\{\varphi_{M\ k}(p)\}_{k=1}^{M} \in \mathbb{R}^{M}$ is given by 
\begin{align}\label{3.3.3}
    \sum_{k=1}^{M}B^{M}_{m \ k} \varphi_{M\ k}(p)&=\nu_{1}(\hat{u}_{N,M}(\hat{x}_{m}^{M};p);\hat{x}_{m}^{M};p), \quad 1\leq m\leq M \\ \nonumber
    &=\nu_{1}(\sum_{n=1}^{N}\hat{u}_{N,M\ n}(p)\zeta_{n}(\hat{x}_{m}^{M});\hat{x}_{m}^{M};p),\quad 1\leq m\leq M.
\end{align}
We then insert \eqref{3.3.3} into \eqref{3.3.2} to get the following nonlinear algebraic equation system 
\begin{align}\label{3.3.4}
\sum_{j=1}^{N}\sum_{m=1}^{M}D^{j(N,M)}_{i\ m}(p)\nu_{1}(\sum_{n=1}^{N}\hat{u}_{N,M\ n}(p)\zeta_{n}(\hat{x}_{m}^{M});\hat{x}_{m}^{M};p)\ \hat{u}_{N,M\ j}(p)=F_{N\ i}(p), \quad 1\leq i \leq N,
\end{align}
with $D^{j(N,M)}(p)=C^{j(N,M)}(p)(B^{M})^{-1} \in \mathbb{R}^{N\times M}$.

To solve \eqref{3.3.4} for $\hat{u}_{N,M\ j}(p), 1\leq j \leq N$, we apply a Newton's iterative scheme: given the current iterate $\hat{\bar{u}}_{N,M\ j}(p), 1\leq j \leq N$, compute an increment $\delta \hat{u}_{N,M\ j}(p), 1\leq j \leq N$, from
\begin{align}\label{3.3.5}
    \sum_{j=1}^{N}[\bar{A}^{N}_{ij}(p)+\bar{E}^{N}_{ij}(p)]\delta \hat{u}_{N,M\ j}(p)=R_{N \ i}(p), \quad 1\leq i \leq N,
\end{align}
and update $\hat{\bar{u}}_{N,M\ j}(p):=\hat{\bar{u}}_{N,M\ j}(p)+\delta \hat{u}_{N,M\ j}(p)$, where the residual $R_{N}(p) \in \mathbb{R}^{N}$ for the Newton's scheme must be calculated at every Newton iteration according to
\begin{align}\label{3.3.6}
      R_{N \ i}(p)=F_{N\ i}(p)-\sum_{j=1}^{N}\sum_{m=1}^{M}D^{j(N,M)}_{i\ m}(p)\nu_{1}(\sum_{n=1}^{N}\hat{\bar{u}}_{N,M\ n}(p)\zeta_{n}(\hat{x}_{m}^{M});\hat{x}_{m}^{M};p)\ \hat{\bar{u}}_{N,M\ j}(p).
\end{align}
Furthermore $\bar{A}^{N}(p)\in \mathbb{R}^{N\times N}$, $\bar{A}_{ij}^{N}(p)=a_{M}^{\nu_{1}}[\hat{\bar{u}}_{N,M}(p)](\zeta_{j},\zeta_{i};p)$ and $\bar{E}^{N}(p) \in \mathbb{R}^{N \times N}$ with
\begin{align}\label{3.3.7}
   \bar{E}^{N}_{ij}(p)&=\sum_{s=1}^{N}\bar{\hat{u}}_{N,M\ s}(p)\sum_{m=1}^{M}D^{s(N,M)}_{i\ m}(p)\frac{g^{j}_{m}(p)}{|J_{\Tau}^{-T}(\hat{x}_{m}^{M},p)\nabla\hat{\bar{u}}_{N,M}(\hat{x}_{m}^{M};p)|},\quad 1\leq i,j\leq N, 
\end{align}
where 
\begin{align*}   
   g^{j}_{m}(p)&=\partial_{1}\nu_{1}(\hat{\bar{u}}_{N,M}(\hat{x}_{m}^{M};p);\hat{x}_{m}^{M};p)[J_{\Tau}^{-T}(\hat{x}_{m}^{M},p)\nabla\bar{\hat{u}}_{N,M}(\hat{x}_{m}^{M};p)]\cdot[J_{\Tau}^{-T}(\hat{x}_{m}^{M},p)\nabla \zeta_{j}(\hat{x}_{m}^{M})]
\end{align*}
for $1\leq m \leq M$. Here $\partial_{1}\nu_{1}$ denotes the partial derivative of $\nu_{1}(p)$ with respect to its first argument.

Although \eqref{3.3.7} looks quite involved, it possesses an affine decomposition and allows efficient assembling in the online phase. Indeed, the matrix $D^{j(N,M)}(p)$ is parameter-separable, since $C^{j(N,M)}(p)$ is parameter-separable and the evaluation of $g^{j} \in \mathbb{R}^{M}$ in \eqref{3.3.7} requires the evaluation of the reduced-basis functions only on the set of interpolation points $T_{M}$. Therefore, these quantities can be computed and stored in the offline phase and can be assembled in the online phase independently of $\mathcal{N}$. The operation count associated with each Newton's update is then as follows: the assembling of the residual $R_{N}(p)$ in \eqref{3.3.6} is achieved at cost $\mathcal{O}(MN^{2})$ together with the EIM system solve at cost $\mathcal{O}(M^{2})$. The Jacobian $\bar{A}^{N}(p)+\bar{E}^{N}(p)$ in \eqref{3.3.4} is assembled at cost $\mathcal{O}(MN^{3})$, where the dominant cost is for the assembling of $\bar{E}^{N}(p)$. It is then inverted at cost $\mathcal{O}(N^{3})$. The operation count in the online phase is thus $\mathcal{O}(MN^{3})$ per Newton iteration. However, we observe in our numerical experiment that it is sufficient to use $\bar{A}^{N}(p)$ and drop $\bar{E}^{N}(p)$ term in \eqref{3.3.5}, which results in $\mathcal{O}(MN^{2}+N^{3})$ operations per Newton iteration.

Next we address the computation of the a-posteriori error bound \eqref{3.2.6}. It requires the computation of the dual norm of the residual \eqref{3.2.5}. Since the right-hand side $f(\cdot;p)$ and $a_{M}[\cdot](\cdot,\cdot;p)$ are parameter-separable, the residual $r_{M}(\cdot;p)$ is also parameter-separable and admits an affine decomposition together with its Riesz-representative $\hat{v}_{r}(p) \in \hat{X}$ according to
\begin{align}
   r_{M}(\hat{v};p)=\sum_{q=1}^{Q_{r}} \Phi_{q}^{r}(p)r_{M \ q}(\hat{v}), \quad \hat{v}_{r}(p)=\sum_{q=1}^{Q_{r}} \Phi_{q}^{r}(\mu)\hat{v}_{r \ q},
\end{align}
where $r_{M}(\hat{v};p)=(\hat{v}_{r}(p),\hat{v})_{\hat{X}}$ for all $ \hat{v} \in \hat{X}$ and  $Q_{r}=Q_{f}+N(M+4ML_{1}+4L_{2})$. Since the dual norm of the residual is equal to the norm of its Riesz-representative, we have
\begin{align}\label{3.3.8}
 \lVert r_{M}(\cdot;p) \rVert_{\hat{X}'}=\lVert \hat{v}_{r}(p) \rVert_{\hat{X}}=(\boldsymbol{\Phi}^{r}(p)^{T}G_{r} \boldsymbol{\Phi}^{r}(p))^{1/2},   
\end{align}
where $\boldsymbol{\Phi}^{r}(p)=\{\Phi_{q}^{r}(p)\}_{q=1}^{Q_{r}} \in \mathbb{R}^{Q_{r}}$ and $G_{r} \in \mathbb{R}^{Q_{r}\times Q_{r}}$ with $(G_{r})_{ij}=(v_{r \ i},v_{r \ j})_{\hat{X}}$ and the dual norm \eqref{3.3.8} is then computed at cost $\mathcal{O}(Q_{r}^{2})$. The evaluation of the norm $\lVert \hat{u}_{N,M}(p) \rVert_{\hat{X}}$ is at cost $\mathcal{O}(N^{2})$. Once $\nu_{LB}$ is available, the constants $C_{1}(p)$ and $C_{2}(p)$ in \eqref{3.2.6} are computed directly. The EIM error is computed on the discretized domain $\hat{\Omega}_{h}\subset \hat{\Omega}$ with the reduced-basis solution 
\begin{align}\label{3.3.9}
    \delta_{M}(p)=\underset{\hat{x}_{b_{j}} \in \hat{\Omega}_{h}}{\max}\lvert \nu_{1}(\hat{u}_{N,M}(p);\hat{x}_{b_{j}};p)-\nu_{1}^{M}(\hat{u}_{N,M}(p);\hat{x}_{b_{j}};p) \rvert. 
\end{align}
The nonlinearity depends on the gradient and it is evaluated on the triangle barycenters $\hat{x}_{b_{j}}$, $1\leq j \leq \mathcal{N}_{T} $, where  $\mathcal{N}_{T}$ is the total number of triangles in the iron material region for a given finite-element triangulation. The EIM procedure results in the set of triangle barycenter points $T_{M}=\{\hat{x}_{b_{1}}^{M},...,\hat{x}_{b_{M}}^{M}\}$, where $M<<\mathcal{N}_{T}$. In the offline phase we evaluate the gradients $\{\nabla \zeta_{n}\}_{n=1}^{N}$ for each basis element $\{\zeta_{n}\}_{n=1}^{N}$ of the reduced-basis space $\hat{W}_{N}^{u}$ on the interpolation barycenters $T_{M}$. We thus store offline $\{\restr{\nabla \zeta_{n}}{T_{M}}(\hat{x}_{b_{j}}^{M})\}_{j=1}^{M}\in \mathbb{R}^{M\times 2}$ for $1\leq n \leq N$  and then efficiently evaluate the nonlinearity on $T_{M}$ with the ansatz $\restr{\nabla\hat{u}_{N,M}(p)}{T_{M}}=\sum_{j=1}^{N}\hat{u}_{N,M \ j}(p)\restr{\nabla \zeta_{j}}{T_{M}}$ online. The operation count for the EIM approximation  $\nu_{1}^{M}(\hat{u}_{N,M}(p);\hat{x}_{j};p)$ in  \eqref{3.3.9} is then $\mathcal{O}(M^{2}+\mathcal{N}_{T}M)$, and the evaluation of $\nu_{1}$ at $M$ points. We note that \eqref{3.3.9} requires the knowledge of $\nu_{1}(\hat{u}_{N,M}(p);\hat{x}_{j};p)$ and thus one full evaluation of the nonlinearity. In order to increase the online computational efficiency, an one-point estimator $\hat{\varepsilon}_{M}(p)$ can be used (see, e.g. \cite{grepl2007efficient}). It requires the evaluation of the nonlinearity at only one point, but $\hat{\varepsilon}_{M}(p)\leq \delta_{M}(p)$ in general, thus this lower bound estimator must be effective, i.e. $\frac{\hat{\varepsilon}_{M}(p)}{\delta_{M}(p)}$
should close to 1. In our case the nonlinearity is of the exponential type and the effectivity of the bound is of the order $10^2$ in practice.

\subsection{Numerical results}
\label{subsec:5}

First we introduce a parameter set $\parset=[18,19]\times[4,5] \times [7,8]$.  The nonlinear reluctivity function $\nu_{1}(p)$ is reconstructed from the real $B-H$ measurements using cubic spline interpolation. Finite element simulations are based on a mesh composed of $121012$ triangles and $60285$ nodes (excluding Dirichlet boundary nodes). Piecewise linear, continuous finite element functions are chosen for the finite element approximation. We solve the finite element problem with Newton's method. We iterate unless the norm of the residual is less than the tolerance level, which we set to $10^{-4}$. The  tolerance level $10^{-5}$ is used for the RB Newton's method. 

We generate the RB-EIM model as follows: we start from $\parset_{train}^{EIM (1)} \subset \parset$  (a regular $6 \times 6\times 6$ grid over $\parset$ of size 216) and compute finite element solutions for each parameter in $\parset_{train}^{EIM(1)}$ to approximate the nonlinearity with the EIM within the prescribed tolerance $\epsilon_{EIM}=5 \cdot 10^{-1}$. Since the norm $\lVert \hat{u}_{N,M}(p) \rVert_{\hat{X}}$ is of the order $10^{-2}$, we hope to further balance the contributions of the reduced-basis and EI nonlinearity approximation in the estimator on the test set. Next we run the RB-Greedy procedure with the prescribed tolerance $\epsilon_{RB}=10^{-2}$ for the estimator \eqref{3.2.6} on $\parset_{train} \subset \parset$, where $\parset_{train}$ is a regular $10 \times 10\times 10$ grid over $\parset$ of size 1000. We set $ \nu_{LB}=110$, since
\begin{align}\label{lower}
 \nu_{LB} \leq \underset{\hat{x} \in \hat{\Omega}}{\min} \ \nu_{1}(|\nabla \hat{u}_{N,M}(\hat{x};p)|) \simeq 110   
\end{align}
 for all $p \in \parset_{train}$ in our setting. This is a robust heuristic procedure, since for small $N$, the reduced-basis solution $\hat{u}_{N,M}(\hat{x};p)$ is a good approximation to $\hat{u}(\hat{x};p)$ in the regions with low magnetic flux density $|\nabla \hat{u}(\cdot;p)|$. The size of the magnet (change in the parameter $p$) influences only the high values of the magnetic flux density $|\nabla \hat{u}_{N,M}(\cdot;p)|$ in the magnetic circuit and does not have an impact on the minimum of the reluctivity function. We note that the evaluation of $\delta_{M}(p)$ \eqref{3.3.9} requires one full evaluation of the nonlinearity, thus it is available for the computation in \eqref{lower} for the a-posteriori error estimation.

Once the reduced-basis model is constructed ($N_{max}=12,M_{max}=50)$, we use it to improve the quality of the nonlinearity approximation: we generate the reduced-basis solutions over $\parset_{train}^{EIM(2)}:=\parset_{train}$ and use them to construct the improved EIM approximation space $W_{M}^{\nu}$ of dimension $M_{max}=50$. With the new approximation of the nonlinearity, we run the RB-Greedy procedure over $\parset_{train}$ again with the prescribed tolerance $\epsilon_{RB}=10^{-2}$, which results in the reduced-basis space $\hat{W}_{N}^{u}$ of dimension $N_{max}=10$. 

Next we introduce a parameter test sample $\parset_{test} \subset \parset$ of size 343 ($7 \times 7\times 7$ grid with uniformly random sampling on each interval) and verify the convergence with $N$ of $\max \bigtriangleup_{N,M}=\underset{p \in \parset_{test}}{\max}\bigtriangleup_{N,M}(p)$  for different values of $M$ (see Fig.2(a)). We see that with $N=8$ and $M=50$ the estimator is below the prescribed tolerance $\epsilon_{RB}=10^{-2}$ on the test set. One observes that there is an increase in the estimator for $N \geq 8$ and for $M<50$ due to the poor quality of the EIM approximation. Moreover, we can naturally split the estimator into two parts: the reduced-basis and the nonlinearity approximation error estimation contributions
\begin{align}\label{3.4.1}
\bigtriangleup^{RB}_{N,M}(p):=\frac{\lVert r_{M}(\cdot;p)\rVert_{\hat{X}'}}{\nu_{\text{LB}}\ C_{1}(p)}, \quad \bigtriangleup^{EIM}_{N,M}(p):=\frac{C_{2}(p)\delta_{M}(p)}{\nu_{\text{LB}}\ C_{1}(p)}\lVert \hat{u}_{N,M}(p) \rVert_{\hat{X}}.
\end{align}
The strategy is to balance two contributions in \eqref{3.4.1} for the specified tolerance level $\varepsilon_{RB}$, e.g. (see Fig.2(b)) by choosing $N=8$ and $M=50$ . In Fig. 2(b) we can also see the improvement from the described above additional EIM step. 
\begin{figure}[H]
\centering
\begin{subfigure}{.5\textwidth}
  \centering
  \includegraphics[width=1\textwidth]{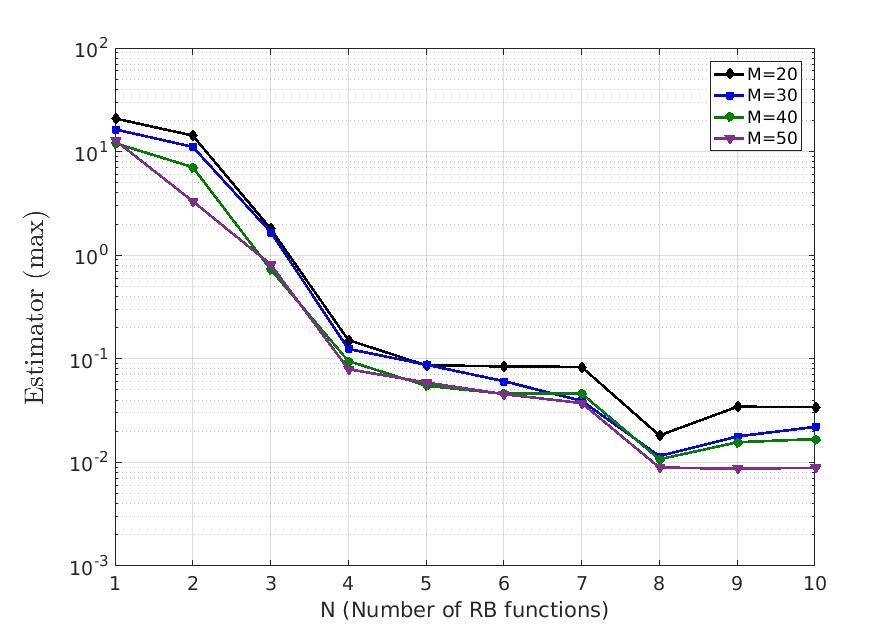}
  \caption{}
  \label{fig:sub1}
\end{subfigure}%
\begin{subfigure}{.5\textwidth}
  \centering
  \includegraphics[width=1\textwidth]{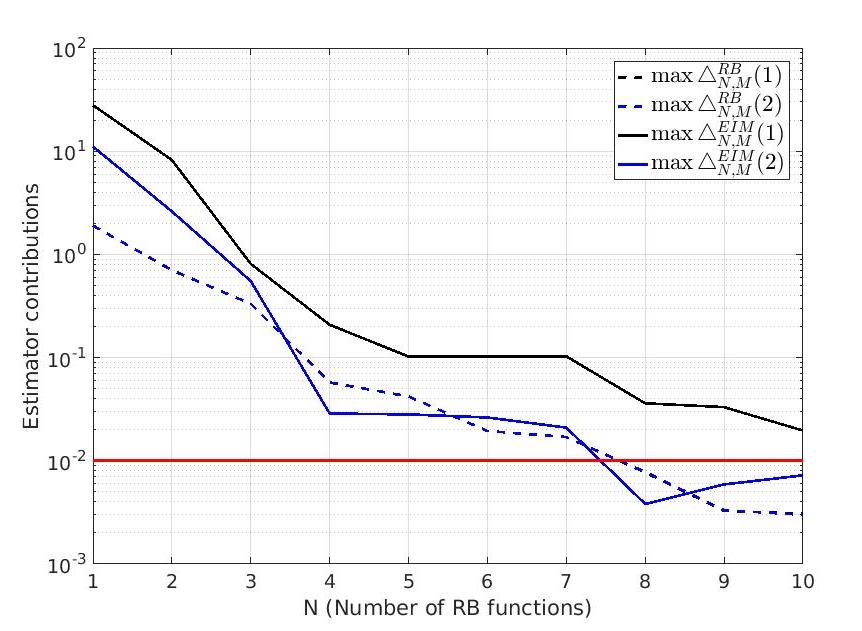}
  \caption{}
  \label{fig:sub2}
\end{subfigure}
\caption{Convergence with $N$ of $\max \bigtriangleup_{N,M}$  for different values of $M$ on the test set (a). Convergence with $N$ of $\max \bigtriangleup^{RB}_{N,M}$ and $\max \bigtriangleup^{EIM}_{N,M}$ contributions for $M=50$ on the test set. The red line is the tolerance level and the number in the label bracket indicates the EIM step (b).}
\end{figure}
In Table 1 we present, as a function of N and M, the maximum error bound $\underset{p \in \parset_{test}}{\max}\bigtriangleup_{N,M}(p)$ as well as the mean $\bar{\eta}_{N,M}$ and $\max \eta_{N,M}$ of the effectivity $\eta_{N,M}(p):=\frac{\bigtriangleup_{N,M}(p)}{\lVert \hat{e}_{N,M}\rVert_{\hat{X}}}$. The effectivities require the knowledge of  ``truth"  solution, therefore we compute the finite element solutions for all the parameters in the test set. We observe that the values of $\bar{\eta}_{N,M}$ and $\max{\eta}_{N,M}$ are quite large, which partially can be explained by the estimate \eqref{3.2.9} for the effectivity $\eta_{N}(p)$ of the reduced-basis approximation. In our example we have 
\begin{align*}
\underset{\hat{x} \in \hat{\Omega}}{\max} \ \nu_{1}(|\nabla \hat{u}_{N,M}(\hat{x};p)|) \leq \nu_{0}    
\end{align*}
on $\parset_{test}$, where $\nu_{0} \approx 7.95 \times 10^5$ is the reluctivity of air. Therefore the upper-bound constant for $\eta_{N}(p)$ is of order $10^{3}$ in practice. 
\begin{table}
\begin{center}
\label{tab:1}  
\caption{Performance of RB-EIM model on the test set}
\begin{tabular}{p{1cm}p{1.5cm}p{2cm}p{2cm} p{2cm} p{2cm}}
\hline\noalign{\smallskip}
$N$ & $M$ & $\max \bigtriangleup_{N,M}$ & $\bar {\bigtriangleup}_{N,M}(p)$ &$ \bar{\eta}_{N,M}$ & $\max{\eta}_{N,M}$\\
\noalign{\smallskip}\svhline\noalign{\smallskip}
4 & 30 & 1.24 E-01 & 4.74 E-02& 7.41 E02 & 1.46 E03\\
6 & 40 & 4.59 E-02 & 2.37 E-02 & 3.98 E02 & 7.18 E02\\
8 & 45 & 9.30 E-03 & 5.10 E-03 & 2.46 E02 & 6.24 E02\\
8 & 50 & 8.90 E-03 & 5.51 E-03 & 2.49 E02 & 6.32 E02\\
10 & 50 & 8.90 E-03 & 5.30 E-03 & 8.48 E02 & 4.65 E03\\
\noalign{\smallskip}\hline\noalign{\smallskip}
\end{tabular}
\end{center}
\end{table}

 In Fig.3 we plot the reduced-basis solutions, i.e. the magnetic equipotential lines for several parameters and the corresponding reluctivity functions, evaluated fully with splines and with EIM. Next we compare the average CPU time required for both the finite element method, which takes $\approx 150$ sec to obtain the solution, and the RB method ($N_{max}=10, M_{max}=50$), which takes $\approx 0.27/0.95$ sec without/with the error bound evaluation and results in the speedup factors of 555 and 158, respectively \footnote{All the computations are performed in MATLAB on Intel Xeon(R) CPU E5-1650 v3, 3.5 GHz x 12, 64 GB RAM}.  The computation of the error bound significantly increases the total CPU time, since the complexity of the error bound evaluation scales quadratically with $Q_{r}$, where $Q_{r}$ is large and requires one full evaluation of the nonlinearity. The offline phase requires the knowledge of the ``truth" finite-element solutions for the first EIM approximation step. Since 216 finite-element solutions were generated in the consecutive order, it takes $\approx$ 9 hours, but it can be done in parallel to reduced the computational time. The Greedy algorithm execution takes $\approx$ 4 hours and since we run it twice, it takes $\approx$ 8 hours for our implementation. We note that our implementation may not be optimal, therefore the offline time is only a rough estimate.    
 
We also note that in the presented numerical example the relatively small parameter domain $\parset$ was chosen. In the authors opinion, it is possible to enlarge the parameter domain with the increasing cost of the nonlinearity approximation by combining few additional EIM steps as described above and exploiting divide-and-conquer principles and hp-adaptivity in the Greedy procedure (see, e.g. \cite{Eftang2012,Sen2008ManyP}).

\section{Conclusion}
\label{sec:4}
In this paper we propose the reduced-basis method for quasilinear elliptic PDEs with application to the nonlinear magnetostatic problem. The geometric parametrisation for the PDE is introduced in the setting of magnet design for the permanent magnet electric motor. We present a new a-posteriori error bound for the class of problems we consider and use it for the weak greedy algorithm and corresponding reduced basis construction. The affine decomposition of the quasilinear form was achieved with the help of EIM. Numerical results confirm a significant speed up factor which supports the validity of the proposed approach.

\begin{figure}[H]
\centering
\begin{subfigure}{.5\textwidth}
  \centering
  \includegraphics[width=1\textwidth]{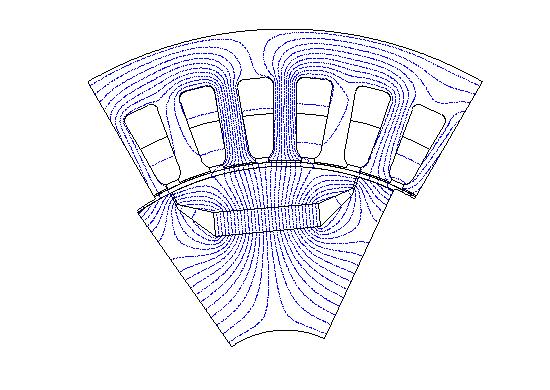}
  \caption{}
  \label{fig:sub1}
\end{subfigure}%
\begin{subfigure}{.5\textwidth}
  \centering
  \includegraphics[width=1\textwidth]{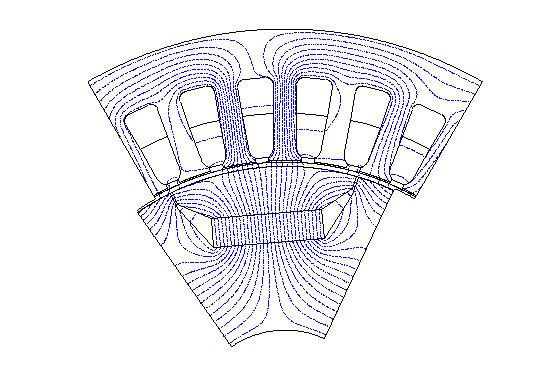}
  \caption{}
  \label{fig:sub2}
\end{subfigure}
\begin{subfigure}{.5\textwidth}
  \centering
  \includegraphics[width=1\textwidth]{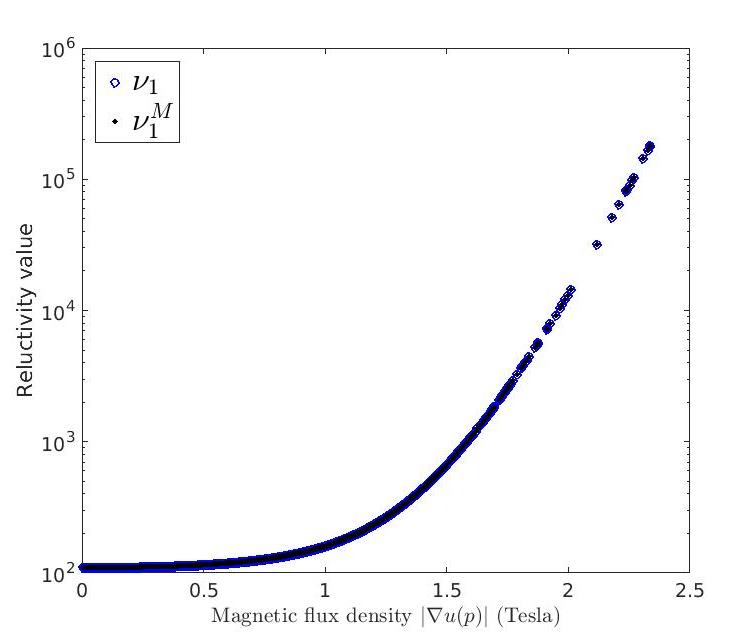}
  \caption{}
  \label{fig:sub1}
\end{subfigure}%
\begin{subfigure}{.5\textwidth}
  \centering
  \includegraphics[width=1\textwidth]{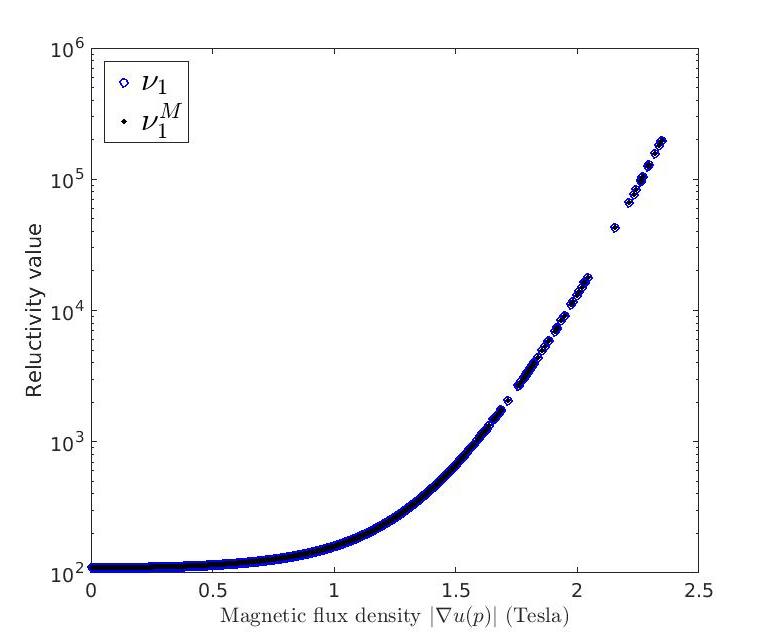}
  \caption{}
  \label{fig:sub1}
\end{subfigure}%
\caption{Magnetic equipotential lines, computed with reduced basis method (10 RB functions, 50 EIM basis functions) for parameter value (a) $p=(18,4,7)$, (b) $p=(19,5,8)$. Reluctivity function $\nu_{1}(p)$, computed with full spline approximation and its EIM counterpart $\nu_{1}^{M}(p)$ for parameter value (c) $p=(18,4,7)$, (d) $p=(19,5,8)$.}
\label{fig:test}
\end{figure}

\section*{Acknowledgement}
Both authors acknowledge the support of the collaborative research project PASIROM funded
by the German Federal Ministry of Education and Research (BMBF) under grant no. 05M2018.

\end{document}